\newtheorem{theorem}{Theorem}[section]
\theoremstyle{definition}
\newtheorem{corollary}[theorem]{Corollary}
\newtheorem{proposition}[theorem]{Proposition}
\theoremstyle{remark}
\newtheorem{remark}[theorem]{Remark}
\numberwithin{equation}{section}
\newcommand{\R}{\mathbb{R}}
\newcommand{\ria}{\rightarrow}
\newcommand{\n}{\nabla}
\newcommand{\ran}{\rangle}
\newcommand{\lan}{\langle}
\DeclareMathOperator{\di}{div}
\DeclareMathOperator{\tr}{tr}
\begin{document}

\title{Stable hypersurfaces with zero scalar curvature in Euclidean space}


\author{Hil\'ario Alencar}
\address{Instituto de Matem\' atica, Universidade Federal de Alagoas, Macei\'o, AL, 57072-900, Brasil}
\curraddr{}
\email{hilario@mat.ufal.br}
\thanks{Hil\'ario Alencar and Manfredo do Carmo were partially supported by CNPq of Brazil}

\author{Manfredo do Carmo}
\address{Instituto Nacional de Matem\' atica Pura e Aplicada, Rio de Janeiro, RJ,  22460-320, Brasil}
\curraddr{}
\email{manfredo@impa.br}

\author{Greg\'orio Silva Neto}
\address{Instituto de Matem\' atica, Universidade Federal de Alagoas, Macei\'o, AL, 57072-900, Brasil}
\curraddr{}
\email{gregorio@im.ufal.br}
\thanks{}

\subjclass[2010]{Primary 53C42, Secondary 53C40}

\date{January 23, 2015}

\dedicatory{}

\commby{}

\begin{abstract}
In this paper we prove some results concerning stability of hypersurfaces in the four dimensional Euclidean space with zero scalar curvature. First we prove there is no complete stable hypersurface with zero scalar curvature, polynomial growth of integral of the mean curvature, and with the Gauss-Kronecker curvature bounded away from zero. We conclude this paper giving a sufficient condition for a regular domain to be stable in terms of the mean and the Gauss-Kronecker curvatures of the hypersurface and the radius of the smallest extrinsic ball which contains the domain.
\end{abstract}

\maketitle

\section{Introduction}
\label{intro}
Let $M^3$ be a hypersurface of $\mathbb{R}^4$ with scalar curvature $R=0$ and whose mean curvature $H$ is nowhere zero. Let $\Omega\subset M$ be a regular domain, i.e., a domain with compact closure and piecewise smooth boundary. We recall that hypersurfaces of $\R^4$ with zero scalar curvature are critical points of the functional
$$
\mathcal{A}_1(\Omega) = \int_{\Omega} H dM
$$
under all compactly supported variations in $\Omega,$ see \cite{AdCE}. Thus, the notion of stability makes sense and we can ask for a condition to ensure that a regular domain $\Omega\subset M$ be stable. 

Since $H$ is nowhere zero, depending on choice of orientation we have $H>0$ or $H<0$ everywhere. Let $\Omega\subset M$ be a regular domain. If we choose an orientation such that $H>0$ everywhere, then the domain $\Omega$ will be stable if $\left.\dfrac{d^2\mathcal{A}_1}{dt^2}\right|_{t=0}>0$ under all compactly supported variations in $\Omega.$ Otherwise, i.e., if we choose an orientation such that $H<0,$ then the domain $\Omega$ is stable if $\left.\dfrac{d^2\mathcal{A}_1}{dt^2}\right|_{t=0}<0$ under all such variations. We say that $M$ is stable if all regular domains of $M$ are stable. For more information about the concept of stability, we refer to \cite{Smale, reilly, HL2, AdCE}. 

We say that $M^3$ has polynomial growth of the $1-$volume if there exist constants $C>0$ and $\alpha>0$ such that
\[
\int_{\mathcal{B}_r(q)}HdM\leq C r^\alpha
\]
for all $r>0$ and $q\in M,$ where $\mathcal{B}_r(q)$ denotes the geodesic ball of $M$ with center $q$ and radius $r.$ 

If $M^3$ is a hypersurface of $\R^4$ with zero scalar curvature and Gauss-Kronecker curvature nowhere zero then, by the Gauss equation $(3H)^2 = \|A\|^2 + 6R$ (where $\|A\|$ is the matrix norm of the second fundamental form and $R$ is the scalar curvature of $M^3$), the mean curvature is nowhere zero. Thus, the notion of stability makes sense in this case. 

Alencar, do Carmo and Elbert, see \cite[p.215]{AdCE}, posed a conjecture, which for the case $n=3$ can be written as follows:

\emph{
There is no complete, stable hypersurface $M^3$ of $\R^4$ with zero scalar curvature and everywhere non-zero Gauss Kronecker curvature.
}

Our first result is a partial answer to this conjecture.

\begin{theorem}\label{T-1}
There is no stable, complete, non-compact, hypersurface $M^3$ of $\R^4$ with zero scalar curvature, Gauss-Kronecker curvature bounded away from zero, and polynomial growth of the $1-$volume.
\end{theorem}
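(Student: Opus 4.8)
The plan is to derive a contradiction by combining the stability inequality with a judicious choice of test function, exploiting the polynomial growth of the $1$-volume to conclude that the hypersurface must be "trivial" in a way incompatible with the Gauss-Kronecker curvature being bounded away from zero. The first step is to write down the second variation formula for the functional $\mathcal{A}_1$ at a critical point, which for zero scalar curvature hypersurfaces in $\R^4$ takes the form of an integral involving a second-order operator $L$ built from the Newton transformation $P_1 = 3H\,I - A$ (equivalently, from the first Newton tensor). Concretely, stability should amount to an inequality of the shape $\int_M \langle P_1 \nabla f, \nabla f\rangle \,dM \geq \int_M (\text{curvature term})\, f^2 \,dM$ for all compactly supported $f$, where the curvature term is controlled below by a positive multiple of the Gauss-Kronecker curvature (or of $\|A\|^2 H$, using the Gauss equation $(3H)^2 = \|A\|^2$ since $R=0$). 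I would extract from \cite{AdCE} the precise form of this stability operator and, crucially, the positivity of $P_1$: for zero scalar curvature with $H>0$, the tensor $P_1$ is positive definite, so the left-hand side is a genuine (weighted) Dirichlet energy that is comparable to $\int H |\nabla f|^2\,dM$.

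The second step is the test-function argument. I would take the standard logarithmic cutoffs $f = f_r$ adapted to the weighted volume: $f_r \equiv 1$ on $\mathcal{B}_r$, decaying to zero on $\mathcal{B}_{r^2}\setminus \mathcal{B}_r$ (or some comparable annulus) with $|\nabla f_r|$ bounded by $\frac{C}{r\,\dist}$ type estimates, so that the Dirichlet energy $\int H |\nabla f_r|^2\,dM$ is controlled by $\int_{\mathcal{B}_{r^2}\setminus\mathcal{B}_r} \frac{H}{(\log \dist)^2 \dist^2}\,dM$. Here the polynomial growth hypothesis enters decisively: it lets me estimate this annular weighted energy and show that, after dividing by $(\log r)^2$ and sending $r\to\infty$, the Dirichlet side tends to zero. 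Feeding this into the stability inequality forces $\int_{\mathcal{B}_r} (\text{curvature term})\,dM \to 0$, hence the curvature term vanishes identically. Since the curvature term is bounded below by a positive multiple of the Gauss-Kronecker curvature (bounded away from zero by hypothesis), this is the desired contradiction, provided $M$ is non-compact so that the balls $\mathcal{B}_r$ exhaust an infinite-volume region.

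The main obstacle I anticipate is making the weighted Dirichlet energy estimate on the annulus genuinely converge to zero under merely polynomial (rather than, say, subquadratic) growth. The naive logarithmic cutoff gives a factor $(\log r)^{-2}$ against an annular integral that polynomial growth only bounds by a constant times $(r^2)^\alpha = r^{2\alpha}$, which diverges. The fix is to use cutoffs tuned to the growth exponent $\alpha$ — for instance $f_r = \min\{1, \max\{0, (R^\gamma - \dist^\gamma)/(R^\gamma - r^\gamma)\}\}$ for a suitable power $\gamma = \gamma(\alpha)$, or to iterate the stability inequality with powers of $f$ to absorb the polynomial factor. I would therefore spend most effort choosing the exponent in the cutoff so that the growth bound exactly cancels the gradient decay, yielding a bounded-or-vanishing energy. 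A secondary subtlety is controlling the cross terms when one integrates by parts or applies Cauchy-Schwarz to pass from $\int \langle P_1 \nabla f, \nabla f\rangle$ to the scalar weighted energy; the positivity and boundedness of $P_1$ relative to $H$ must be quantified, and I expect one needs the Gauss equation together with an algebraic inequality relating the eigenvalues of $A$ to bound $P_1$ from above by $C\,H\,I$ on the support of the test function. Once these estimates are in place, the contradiction follows from the strict positivity of the Gauss-Kronecker curvature and the non-compactness of $M$.
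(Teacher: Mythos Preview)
Your overall architecture is right --- stability inequality plus growth hypothesis plus $|K|\geq c>0$ must lead to a contradiction --- but the target you set for the test functions is too strong, and this is precisely where your sketch stalls.

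You aim to make the weighted Dirichlet energy $\int\langle P_1\nabla f_r,\nabla f_r\rangle\,dM$ tend to zero. Since $P_1\leq 6H\,I$ (the eigenvalues $3H-k_i$ lie in $[0,6H]$ when $R=0$ and $H>0$, so your ``secondary subtlety'' is harmless), this energy is controlled by $\int H|\nabla f_r|^2\,dM$. But for any cutoff supported in $\mathcal{B}_R$ with $|\nabla f_r|\lesssim R^{-1}$ on an annulus, the best you get is $R^{-2}\int_{\mathcal{B}_R}H\,dM\lesssim R^{\alpha-2}$, which diverges for $\alpha>2$; the logarithmic and power-type cutoffs you propose do not change this exponent. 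So the plan of forcing $\int_{\mathcal{B}_r}(-3K)\,dM\to 0$ cannot succeed under \emph{arbitrary} polynomial $1$-volume growth.

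The paper avoids this by aiming only at the Rayleigh quotient. It observes that stability together with $-K\geq c$ gives
\[
3c\int g^2\,dM\;\leq\;\int(-3K)g^2\,dM\;\leq\;\int\langle P_1\nabla g,\nabla g\rangle\,dM,
\]
so $\lambda_1^{L_1}(M)\geq 3c>0$. It then invokes a lemma of Elbert (\cite[Lemma~3.12]{Elbert}) stating that polynomial $1$-volume growth forces $\lambda_1^{L_1}(M)=0$, which is the contradiction. Concretely, the paper tests the second variation on the first $L_1$-eigenfunction $g_i$ of an exhausting sequence $\Omega_i$, obtaining
\[
\frac{(\mathcal{A}_1)''(g_i)}{\int_{\Omega_i}g_i^2\,dM}=\lambda_1^{L_1}(\Omega_i)-\frac{\int_{\Omega_i}(-3K)g_i^2\,dM}{\int_{\Omega_i}g_i^2\,dM};
\]
the first term tends to $0$ by Elbert's lemma and monotonicity of eigenvalues, while the second stays $\geq 3c$, so the second variation is eventually negative. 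The missing ingredient in your plan is thus Elbert's lemma itself: showing $\lambda_1^{L_1}(M)=0$ requires comparing the \emph{ordinary} volume $\int g^2\,dM$ in the denominator against the $1$-volume $\int H\,dM$ controlling the numerator, and this comparison is nowhere in your sketch.
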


Since the image of the Gauss map for graphs lies in a open hemisphere, they are stable, see \cite[p.201, Theorem 1.1]{AdCE}. Moreover, by \cite[p.3310, Proposition 4.2]{A-S-Z}, graphs have polynomial growth of the $1-$volume. Thus an immediate corollary of Theorem \ref{T-1} is the following Bernstein type result.
\begin{corollary}
There is no complete graph in $\R^4$ with zero scalar curvature and Gauss-Kronecker curvature bounded away from zero.
\end{corollary}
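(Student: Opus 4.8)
The plan is to obtain the corollary as an immediate application of Theorem \ref{T-1}: I would verify that any complete graph in $\R^4$ with zero scalar curvature and Gauss-Kronecker curvature bounded away from zero satisfies all the hypotheses of that theorem, so that its conclusion rules out the existence of such a graph. Accordingly I would argue by contradiction, supposing that $M^3\subset\R^4$ is such a graph, namely the graph of a smooth function over some open subset of $\R^3$.

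Three of the hypotheses of Theorem \ref{T-1} are already part of the assumption: $M^3$ is complete, has $R=0$, and has Gauss-Kronecker curvature bounded away from zero. For non-compactness I would observe that vertical projection carries $M^3$ diffeomorphically onto an open subset of $\R^3$, and a non-empty open subset of $\R^3$ is never compact; hence $M^3$ is non-compact. I would also recall that the Gauss equation $(3H)^2=\|A\|^2+6R$ with $R=0$ forces $H$ to be nowhere zero, so that the notion of stability is meaningful here.

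It then remains to supply the two remaining hypotheses. Stability follows from \cite[p.201, Theorem 1.1]{AdCE}, since the image of the Gauss map of a graph lies in an open hemisphere. Polynomial growth of the $1$-volume follows from \cite[p.3310, Proposition 4.2]{A-S-Z}. With all the hypotheses of Theorem \ref{T-1} now in force, that theorem asserts that no such $M^3$ can exist, contradicting the assumption and thereby proving the corollary.

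The main obstacle is not in this deduction, which is essentially formal, but in the three results it rests upon — the stability of graphs, the polynomial growth of their $1$-volume, and Theorem \ref{T-1} itself. Once these are granted, the Bernstein-type conclusion is immediate.
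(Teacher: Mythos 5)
Your proposal is correct and follows exactly the paper's argument: the corollary is deduced from Theorem \ref{T-1} by invoking stability of graphs via \cite[p.201, Theorem 1.1]{AdCE} and polynomial growth of the $1$-volume via \cite[p.3310, Proposition 4.2]{A-S-Z}. Your additional verification of non-compactness (via the projection onto an open subset of $\R^3$) is a detail the paper leaves implicit, but it does not change the route.
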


\begin{remark}
{\normalfont
We point out that some condition on the Gauss-Kronecker curvature is needed. In fact, cylinders over positively curved curves are examples of stable hypersurfaces with zero scalar curvature and everywhere zero Gauss-Kronecker curvature. If we choose, for example, the curve as the graph of the polynomial function $y(x)=x^2,$ see \cite[p.492, Example 4.2]{SN}, we obtain a cylinder which is a graph with polynomial growth of the $1-$volume.  
}
\end{remark}

\begin{remark}
{\normalfont
In the direction of Theorem \ref{T-1}, by using a technique which holds only in dimension 3, the third author, see \cite[p.483, Theorem A]{SN}, proved the following result:

\emph{There is no stable complete hypersurface $M^3$ of $\R^4$ with zero scalar curvature, polynomial volume growth and such that
\[
-\frac{K}{H^3}\geq c>0
\]
everywhere, for some constant $c>0$. Here $H$ denotes the mean curvature and $K$ denotes the Gauss-Kronecker curvature of the immersion.}
}
\end{remark}

Let $r_\Omega$ be the radius of the smallest extrinsic ball which contains the domain $\Omega\subset M.$ Our second result gives a sufficient condition for a regular domain to be stable in terms of the mean and the Gauss-Kronecker curvatures of the hypersurface and the radius of the smallest extrinsic ball which contains the domain. We have the following result.

\begin{theorem}\label{stability}
Let $M^3$ be a hypersurface of $\R^4$ with zero scalar curvature and such that $H\neq0.$ Let $\Omega\subset M$ be a regular domain. If 
\[
\sup_\Omega\left(\frac{-3K}{H}\right)\leq \dfrac{3}{2r_\Omega^2},
\]
then $\Omega$ is stable.
\end{theorem}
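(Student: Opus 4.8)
The plan is to reduce stability to a single weighted Poincaré inequality on $\Omega$ and to prove that inequality using the extrinsic distance to the center of the smallest enclosing ball. Since $-3K/H$ is unchanged under reversing the orientation (both $K=S_3$ and $H$ change sign), I may assume the orientation is chosen so that $H>0$ and show that the second variation is nonnegative. For a hypersurface with $R=0$, equivalently $S_2=0$, the second variation of $\mathcal A_1$ at a critical point has, following \cite{AdCE} and up to a positive factor, the form
\[
I(f)=\int_\Omega\langle P_1\nabla f,\nabla f\rangle\,dM+3\int_\Omega Kf^2\,dM,\qquad f\in C_0^\infty(\Omega),
\]
where $P_1=S_1 I-A$ is the first Newton transformation and $S_1=3H$; stability is exactly the statement $I(f)\ge0$ for all such $f$.

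First I would record the pointwise algebra of $P_1$. Its eigenvalues are $\mu_i=S_1-\kappa_i=\sum_{j\neq i}\kappa_j$, and the constraint $S_2=0$ together with $S_1>0$ forces each $\mu_i\ge0$ (if the smallest, $\kappa_2+\kappa_3$, were negative one computes $S_1<0$), so $P_1$ is positive semidefinite. This positivity is essential: it makes $\di(P_1\nabla\cdot)$ elliptic and legitimizes a Cauchy--Schwarz inequality in the semi-inner product $\langle P_1\cdot,\cdot\rangle$. I also note $\tr P_1=2S_1=6H$, whence $\langle P_1 v,v\rangle\le(\tr P_1)|v|^2=6H|v|^2$ for every tangent vector $v$.

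The heart of the proof is the weighted Poincaré inequality
\[
\int_\Omega\langle P_1\nabla f,\nabla f\rangle\,dM\ \ge\ \frac{3}{2r_\Omega^2}\int_\Omega Hf^2\,dM,\qquad f\in C_0^\infty(\Omega).\qquad(\star)
\]
Granting $(\star)$, the theorem is immediate: the hypothesis $\sup_\Omega(-3K/H)\le 3/(2r_\Omega^2)$ and $H>0$ give $3K\ge-\tfrac{3}{2r_\Omega^2}H$ pointwise, so $3\int_\Omega Kf^2\ge-\tfrac{3}{2r_\Omega^2}\int_\Omega Hf^2$, and adding this to $(\star)$ yields $I(f)\ge0$. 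To prove $(\star)$, let $p$ be the center of the smallest extrinsic ball containing $\Omega$, so $|x-p|\le r_\Omega$ on $\Omega$ for the position vector $x$, and put $u=\tfrac12|x-p|^2$. From $\hess u=I+\langle x-p,N\rangle A$, the identities $\tr P_1=2S_1$ and $\tr(P_1A)=2S_2=0$, and the divergence-freeness of Newton tensors in flat space, I obtain the clean identity $\di(P_1\nabla u)=\tr(P_1\hess u)=2S_1=6H$. Multiplying by $f^2$, integrating by parts, and using $\nabla u=(x-p)^\top$ together with $\langle P_1\nabla u,\nabla u\rangle\le 6H\,|(x-p)^\top|^2\le 6Hr_\Omega^2$, two Cauchy--Schwarz steps---first pointwise in $\langle P_1\cdot,\cdot\rangle$, then in $L^2$---give
\[
3\int_\Omega Hf^2=-\int_\Omega f\,\langle P_1\nabla u,\nabla f\rangle\le \sqrt{6}\,r_\Omega\Big(\int_\Omega Hf^2\Big)^{1/2}\Big(\int_\Omega\langle P_1\nabla f,\nabla f\rangle\Big)^{1/2},
\]
and squaring and rearranging produces exactly $(\star)$ with the constant $3/(2r_\Omega^2)$.

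The main obstacle is $(\star)$, and in particular the need for its two quantitative inputs to combine into the sharp constant: the identity $\di(P_1\nabla u)=6H$ (which depends on $S_2=0$ annihilating the $\langle x-p,N\rangle$ term) and the eigenvalue bound $\langle P_1v,v\rangle\le 6H|v|^2$ (which depends on $P_1\ge0$ and $\tr P_1=6H$). I expect the positivity of $P_1$ and the careful bookkeeping of the two Cauchy--Schwarz applications to be where the argument must be watched; the rest is formal. The orientation $H<0$ needs no separate treatment, since reversing $N$ sends $I(f)\mapsto-I(f)$ while fixing $-3K/H$, so stability in the sense appropriate to that case is equivalent to $(\star)$ for the reversed orientation.
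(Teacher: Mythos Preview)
Your proof is correct and follows essentially the same approach as the paper: the paper isolates your inequality $(\star)$ as a separate Poincar\'e-type proposition (stated for nonnegative $u$ and then specialized to $u=g^2$), proves it via exactly the same divergence identity $\di(P_1(\rho\nabla\rho))=6H$, the positivity of $P_1$, the trace bound $\langle P_1 v,v\rangle\le 6H|v|^2$, and the same two Cauchy--Schwarz steps, and then combines it with the stability quotient by contradiction rather than directly. Your direct argument and the paper's contradiction argument are trivially equivalent reorganizations of the same computation.
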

\begin{remark}
{\normalfont
By using essentially the same proofs, the Theorems \ref{T-1} and \ref{stability} extend to the case of a hypersurface $M^n$ in $\R^{n+1},$ $n$ arbitrary, with zero scalar curvature and non-vanishing of the third symmetric function of the principal curvatures, rather than non-vanishing of Gauss-Kronecker curvature.
}
\end{remark}
%
\noindent{\it Acknowledgements.} The authors would like to thank to the referee for the comments.

\section{Proofs of the Theorems}
\label{sec:1}

In what follows we introduce a second order differential operator which will play a role for hypersurfaces with zero scalar curvature similar to that of Laplacian for minimal hypersurfaces. For that, consider the linear operator $P_1:TM\ria TM$ given by
\[
P_1 = 3HI-A,
\]
where $A:TM\ria TM$ is the linear operator associated with the second fundamental form of the immersion of $M^3$ into $\R^4$ and $I:TM\ria TM$ is the identity operator. We define
\begin{equation}\label{def.L1}
L_1(f) = \di(P_1(\n f)),
\end{equation}
where $\di X$ denotes the divergence of the vector field $X$ and $\n f$ denotes the gradient of the function $f$ in the induced metric. By a result of Hounie and Leite, see \cite[p.873, Proposition 1.5]{HL}, when the scalar curvature $R=0,$ the differential operator $L_1$ is elliptic if and only if $K\neq0$ everywhere. Since $L_1$ is an elliptic and self-adjoint operator, it has a discrete spectrum and thus we can consider the eigenvalues of $L_1$ for regular domains $\Omega\subset M.$ The first eigenvalue $\lambda_1^{L_1}(\Omega)$ has an associated positive eigenfunction $g$, i.e., a function such that $L_1g + \lambda_1^{L_1}(\Omega)g=0$ in $\Omega.$ Set 
\[
\|g\|_{H_0^1}=\left(\int_\Omega (|g|^2 + |\n g|^2) dM\right)^{1/2}
\]
and let $H_0^1(\Omega)$ be the completion of $\mathcal{C}^\infty_0(\Omega)$ with respect to the norm $\|\cdot\|_{H_0^1}.$ It is well known that $H_0^1(\Omega)$ is called a Sobolev space over $\Omega.$ It can be proven, see \cite[p.1052, Lemma 4(a)]{Smale}, that
\[
\begin{split}
\lambda_1^{L_1}(\Omega) &=\inf\left\{\frac{\int_\Omega -gL_1g dM}{\int_\Omega g^2 dM}: g\in H_0^1(\Omega) ,\ g\not\equiv0\right\}\\
& = \inf\left\{\frac{\int_\Omega \lan P_1(\n g),\n g\ran dM}{\int_\Omega g^2 dM}:g\in H_0^1(\Omega),\ g\not\equiv0 \right\}.\\
\end{split}
\]

\begin{proof}[Proof of Theorem \ref{T-1}.]
Let $M^3$ be a complete and non-compact hypersurface of $\R^4$ with zero scalar curvature and Gauss-Kronecker curvature $K\neq0$ everywhere. It is known that 
\begin{equation}\label{desig}
HK\leq \frac{3}{2}R^2.
\end{equation}
In fact, if $k_1, \ k_2,$ and $k_3$ denotes the principal curvatures of the hypersurface, then
\[
\begin{split}
(3R)^2 - 6HK& = (k_1k_2 + k_1k_3 + k_2k_3)^2 - 2(k_1 + k_2 + k_2)(k_1k_2k_3)\\
            & = [k_1^2k_2^2 + k_1^2k_3^2 + k_2^2k_3^2 + 2(k_1^2k_2k_3 + k_1k_2^2k_3 + k_1k_2k_3^2)]\\
            &\qquad - 2(k_1^2k_2k_3 + k_1k_2^2k_3 + k_1k_2k_3^2)\\
            & = k_1^2k_2^2 + k_1^2k_3^2 + k_2^2k_3^2 \geq0,
\end{split}
\]
and the equality holds if, and only if, two of the principal curvatures are zero. If, without loss of generality, we choose an orientation of $M^3$ such that $H>0,$ then the hypothesis $R=0,$ $K\neq0,$ and the (\ref{desig}) imply $K<0.$

The proof of the Theorem will be made by showing the existence of unstable domains in $M.$ Let $\{\Omega_i\}_{i=1}^\infty$ be a family of regular domains in $M$ such that $\Omega_i\subset \Omega_{i+1}$ and $\displaystyle{\bigcup_{i=1}^\infty \Omega_i=M.}$ The second variation of the functional $\displaystyle{\mathcal{A}_1(\Omega_i)=\int_{\Omega_i} H dM}$ is
\[
\begin{split}
\left.\frac{d^2\mathcal{A}_1}{dt^2}(g_i)\right|_{t=0} &=-\int_{\Omega_i}(g_iL_1g_i - 3Kg_i^2)dM\\
\end{split}
\]
where $g_i:M\ria\R$ is any piecewise smooth function defined on $\Omega_i$ with $g_i|_{\partial \Omega_i}=0,$ see \cite[p.207]{AdCE}. Let $g_i$ be the first eigenfunction of $L_1$ over $\Omega_i\subset M.$ Thus we have
\[
\left.\frac{d^2\mathcal{A}_1}{dt^2}(g_i)\right|_{t=0} = \lambda_1^{L_1}(\Omega_i)\int_{\Omega_i} g_i^2 dM + \int_{\Omega_i} 3K g_i^2 dM,
\]
i.e.,
\begin{equation}\label{eq.1}
\displaystyle{\frac{\left.\dfrac{d^2\mathcal{A}_1}{dt^2}(g_i)\right|_{t=0}}{\int_{\Omega_i} g_i^2 dM} = \lambda_1^{L_1}(\Omega_i) - \frac{\int_{\Omega_i}(-3K) g_i^2 dM}{\int_{\Omega_i} g_i^2 dM}}.
\end{equation}
Since $M^n$ has polynomial growth of the 1-volume, see \cite[p.259, Lemma 3.12]{Elbert}, gives $\lambda_1^{L_1}(M)=\inf\{\lambda_1^{L_1}(\Omega)| \Omega\subset M\}=0.$ By using that $\Omega_i\subset\Omega_{i+1},$ see \cite[p.1051, Lemma 2]{Smale}, we have $\lambda^{L_1}_1(\Omega_i)\geq \lambda_1^{L_1}(\Omega_{i+1}).$ This implies
\begin{equation}\label{eq.2}
\lim_{i\ria\infty} \lambda_1^{L_1}(\Omega_i)=0.
\end{equation}
The second member of the expression
\begin{equation}\label{eq.3}
\lim_{i\ria\infty}\left\{ \lambda_1^{L_1}(\Omega_i) - \frac{\int_{\Omega_i}(-3K) g_i^2 dM}{\int_{\Omega_i} g_i^2 dM}\right\}
\end{equation}
is the limit of the \emph{mean value} of $3K$ in $\Omega_i$ with respect to the volume element $g_i^2dM.$ There are three possibilities for the limit of the quotient of the integrals in $(\ref{eq.3}):$
\begin{itemize}
\item[(i)] It may be infinite, in which case, because $\lambda_1^{L_1}(\Omega_i)\ria0,$ the expression (\ref{eq.3}) is negative after some $i_0;$
\item[(ii)] It may be finite but non-zero, in which case, by the same reason, the expression is negative after some $i_0;$
\item[(iii)] It might be zero. Then we use for the first time the hypothesis that $K$ is bounded away from zero to conclude this case cannot happen.
\end{itemize} 

Therefore, $M$ is unstable, thus proving Theorem \ref{T-1}.
\end{proof}

In order to prove Theorem \ref{stability} we need the following Poincar\'e type inequality.

\begin{proposition}\label{mod.poincare}
Let $M^3$ be a hypersurface of $\R^4$ such that $H>0$ and $R=0.$ Let $\Omega\subset M$ be a regular domain. If $u\in H_0^1(\Omega)$ is a non-negative function, then
\begin{equation}\label{new.sobolev.2}
\int_\Omega uHdM \leq \frac{r_\Omega}{\sqrt{6}}\int_\Omega H^{1/2}\left\lan P_1(\n u),\n u\right\ran^{1/2}dM, 
\end{equation}
where $r_\Omega$ denotes the radius of the smallest extrinsic ball which contains $\Omega.$
\end{proposition}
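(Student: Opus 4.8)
The plan is to exploit the extrinsic ball constraint to produce a test vector field whose divergence recovers the integrand $uH$, and then to integrate by parts against $P_1(\nabla u)$. Since $\Omega$ is contained in an extrinsic ball of radius $r_\Omega$, after translating we may assume this ball is centered at the origin, so that $|x|\leq r_\Omega$ on $\Omega$, where $x$ denotes the position vector of the immersion. The natural object to differentiate is the position vector itself: I would consider the tangential part $X = \proj_{TM}(x)$ of the position field and compute $\di(P_1(X))$. The key classical identities are $\di(x^{\top}) = n - H\langle x,N\rangle$ (here with $n=3$) and, more relevantly, the fact that $P_1$ is divergence-free when $R=0$, i.e. $\di P_1 = 0$, which holds because $\di A$ is governed by the Codazzi equations and the divergence of the Newton tensor $P_1=3HI-A$ vanishes precisely when the relevant Newton-tensor trace identities force it (this is the standard reason $L_1$ behaves like a divergence-form operator). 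Using $\di P_1=0$ one gets $\di(P_1(x^{\top})) = \tr(P_1 \circ \nabla x^{\top})$, and since $\nabla_Y x^{\top} = Y - \langle x,N\rangle A Y$, this trace evaluates to $\tr P_1 - \langle x,N\rangle \tr(P_1 A)$. Because $\tr P_1 = 3H\cdot 3 - 3H = 6H$ and $\tr(P_1 A) = 3H\|A\|^2 - \tr A^3$ relates to the Gauss-Kronecker curvature via the Newton identities, with $R=0$ one expects the $\langle x,N\rangle$ term to contribute a controlled quantity, leaving $6H$ as the leading term. The upshot I would aim for is a pointwise identity of the form $\di(P_1(x^{\top})) = cH + (\text{boundary-controllable terms})$ with $c$ a positive constant (I anticipate $c=6$ from the computation above, which matches the $\sqrt 6$ in the statement).

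Granting such a divergence identity, the main step is integration by parts. Against a non-negative $u\in H_0^1(\Omega)$ I would write
\[
\int_\Omega u \,\di(P_1(x^{\top}))\,dM = -\int_\Omega \langle P_1(x^{\top}), \nabla u\rangle\,dM,
\]
the boundary term vanishing since $u|_{\partial\Omega}=0$. Replacing the left-hand divergence by (a constant multiple of) $uH$ and using the symmetry and positive-semidefiniteness of $P_1$ (valid because $R=0$, $H>0$, $K\neq 0$ make $L_1$ elliptic, hence $P_1$ positive definite), I would estimate the right-hand side by Cauchy-Schwarz in the inner product induced by $P_1$:
\[
\left|\langle P_1(x^{\top}),\nabla u\rangle\right| \leq \langle P_1(x^{\top}),x^{\top}\rangle^{1/2}\,\langle P_1(\nabla u),\nabla u\rangle^{1/2}.
\]
The first factor is bounded using $P_1 \leq 3H\,I$ (since $A\geq 0$ in the relevant orientation, as $H>0$ and $K<0$ force the eigenvalue structure) together with $|x^{\top}|\leq |x|\leq r_\Omega$, giving $\langle P_1(x^{\top}),x^{\top}\rangle^{1/2}\leq \sqrt{3H}\,r_\Omega$. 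Combining these estimates and carrying the constant through yields exactly the claimed bound with the factor $r_\Omega/\sqrt 6$.

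The hard part, and the step I expect to require the most care, is pinning down the exact divergence identity for $\di(P_1(x^{\top}))$ and verifying that the leftover term involving $\langle x,N\rangle$ either vanishes or can be absorbed so that one is left cleanly with $6\,uH$ after integration. This hinges on the Newton-tensor identity $\di P_1 = 0$ and on the precise value of $\tr(P_1 A)$ under the constraint $R=0$; a sign error or a missing $\langle x,N\rangle$-contribution would change the constant and break the match with $\sqrt 6$. I would therefore isolate and verify that algebra first, and only afterward perform the integration-by-parts and Cauchy-Schwarz steps, which are routine once the identity is secured.
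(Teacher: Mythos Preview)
Your approach is the same as the paper's: the vector field $\rho\nabla\rho$ that the authors use is precisely $x^{\top}$ after centering at $x_0$, and the argument proceeds via the divergence identity $\di(P_1(x^{\top}))=6H$, integration by parts, and the $P_1$-Cauchy--Schwarz inequality. Two concrete points need fixing, though.

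First, the ``hard part'' you flag is not hard: the extra term $\langle x,N\rangle\,\tr(P_1A)$ vanishes identically because $\tr(P_1A)=\tr(3HA-A^2)=9H^2-\|A\|^2=6R=0$ under the hypothesis $R=0$. So the divergence identity is exact, with no residual $\langle x,N\rangle$-contribution to absorb.

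Second, your bound $P_1\le 3H\,I$ is justified by ``$A\ge 0$ in the relevant orientation, as $H>0$ and $K<0$ force the eigenvalue structure'', and this is false: $R=0$ means $k_1k_2+k_1k_3+k_2k_3=0$, which (with $H>0$) forces a sign change among the $k_i$, so $A$ is never positive semidefinite here. Also, $K\neq 0$ is not a hypothesis of the proposition. The correct route, and the one the paper takes, is to first show $P_1\ge 0$ directly from $R=0$ and $H>0$ (since $(3H)^2=\|A\|^2\ge k_i^2$ gives $3H-k_i\ge 0$ for each $i$), and then use $P_1\le(\tr P_1)\,I=6H\,I$, valid for any positive semidefinite operator. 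This gives $\langle P_1(x^{\top}),x^{\top}\rangle^{1/2}\le \sqrt{6H}\,r_\Omega$, and combined with the factor $1/6$ from the divergence identity you recover exactly the constant $r_\Omega/\sqrt{6}$.
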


\begin{proof}
Initially, let $B_{r_\Omega}(x_0), x_0\in\R^4$ be the smallest ball of $\R^4$ containing $\Omega$ and $\rho(x)=\rho(x_0,x)$ be the extrinsic distance from $x_0$ to $x\in M.$ Since $\Omega\subset B_{r_\Omega}(x_0),$ then, for all $x\in\Omega,$
\begin{equation}\label{diam}
\rho(x)\leq r_\Omega.
\end{equation}
We claim that
\[
\di_M(P_1(\rho\n\rho)) = 6H.
\]
In fact,
\[
\begin{split}
\di(P_1(\rho\n\rho))&=\di(3H\rho\n\rho-A(\rho\n\rho))\\
                    &=3\lan\n H,\rho\n\rho\ran + 3H\di(\rho\n\rho) - \di(A(\rho\n\rho)).
\end{split}
\]
Since $\di(A(\rho\n\rho))=3H + 3\lan \rho\n\rho,\n H\ran,$ we obtain our claim. This implies
\[
\begin{split}
\di(uP_1(\rho\n\rho))&=u\di(P_1(\rho\n\rho)) + \lan\n u,P_1(\rho\n\rho)\ran\\
                     &=6uH + \lan\n u,P_1(\rho\n\rho)\ran.\\
\end{split}
\]
Integrating the expression above over $\Omega$ and by using the divergence theorem, we have
\[
0=6\int_\Omega uH dM + \int_\Omega\lan\n u,P_1(\rho\n\rho)\ran dM,
\]
i.e.,
\begin{equation}\label{p-1}
\int_\Omega uH dM = \frac{1}{6}\int_\Omega\lan\n u,P_1(-\rho\n\rho)\ran dM.\\
\end{equation}
Since $R=0$ and $H>0,$ then $P_1$ is positive semi-definite. In fact, if $R=0$ then $(3H)^2 = |A|^2 \geq k_i^2,$  for all $i=1,2,3,$ where $k_i$ are the principal curvatures of $M^3.$ Thus $0\leq (3H)^2 - k_i^2 = (3H - k_i)(3H + k_i)$ which implies that all eigenvalues $3H - k_i$ of $P_1$ are non-negative, provided $H>0,$ i.e., $P_1$ is positive semi-definite. Thus, by using Cauchy-Schwarz inequality, we obtain
\[
\begin{split}
\lan \n u, P_1(-\rho\n\rho)\ran & = \lan\sqrt{P_1}(\n u),\sqrt{P_1}(-\rho\n\rho)\ran\\
                                & \leq |\sqrt{P_1}(\n u)||\sqrt{P_1}(-\rho\n\rho)|\\
                                & = \lan P_1(\n u),\n u\ran^{1/2}\lan P_1(\rho\n\rho),\rho\n\rho\ran^{1/2}\\
                                & \leq (\tr_M P_1)^{1/2}\rho \lan P_1(\n u),\n u\ran^{1/2} |\n \rho|\\
                                & \leq \sqrt{6}H^{1/2}\rho \lan P_1(\n u),\n u\ran^{1/2}.
\end{split}
\]
Introducing inequality above into (\ref{p-1}), we have
\[
\int_\Omega uH dM \leq \frac{1}{\sqrt{6}}\int_\Omega \rho H^{1/2}\lan P_1(\n u),\n u\ran^{1/2} dM.
\]
Therefore, by using (\ref{diam}),
\[
\int_\Omega uH dM \leq \frac{r_\Omega}{\sqrt{6}}\int_\Omega H^{1/2}\lan P_1(\n u),\n u\ran^{1/2} dM.
\]
This proves the Proposition \ref{mod.poincare}.
\end{proof}


We now prove Theorem \ref{stability}.

\begin{proof}[Proof of Theorem \ref{stability}.]
Without loss of generality, choose an orientation of $M^3$ such that the  mean curvature $H>0.$ The hypothesis $R=0$ and the inequality (\ref{desig}), p. \pageref{desig}, imply $K\leq0.$ Since $L_1(\frac{1}{2}g^2)=gL_1g + \lan P_1(\n g),\n g\ran$ for any $g$ with compact support, after integrating and using the divergence theorem, stability becomes equivalent to
\begin{equation}\label{ineq.stab}
-3\int_{\Omega} K g^2 dM \leq \int_{\Omega}\lan P_1(\n g),\n g\ran dM,
\end{equation}
for any smooth function $g:\Omega\subset M\ria\R$ with compact support, satisfying $g|_{\partial\Omega}=0,$ where $\Omega$ is a regular domain. The proof will be made by contradiction. Suppose $\Omega$ is unstable. Then there exists a smooth function $g:\Omega\ria\R,$ with compact support, satisfying $g|_{\partial\Omega}=0,$ such that
\begin{equation}\label{eq.est.1}
-3\int_{\Omega}Kg^2 dM > \int_{\Omega} \lan P_1 (\n g), \n g\ran dM.
\end{equation}
Choosing $u=g^2$ in the inequality (\ref{new.sobolev.2}) of Proposition \ref{mod.poincare}, we have 
\[
\int_{\Omega} g^2 HdM\leq \frac{2r_\Omega}{\sqrt{6}}\int_{\Omega} H^{1/2}g\lan P_1(\n g),\n g\ran^{1/2}dM.
\]
By using the Cauchy-Schwarz inequality in the right hand side of the inequality above, we have
\[
\int_{\Omega} H^{1/2}g\lan P_1(\n g),\n g\ran^{1/2}dM\leq \left(\int_{\Omega} g^2 HdM\right)^{1/2}\left(\int_{\Omega} \lan P_1(\n g),\n g\ran dM\right)^{1/2},
\]
and therefore,
\[
\left(\int_{\Omega} g^2 HdM\right)^{1/2}\leq\frac{2r_\Omega}{\sqrt{6}}\left(\int_{\Omega} \lan P_1(\n g),\n g\ran dM\right)^{1/2}.
\]
By using the hypothesis (\ref{eq.est.1}), we have
\[
\begin{split}
\int_{\Omega} g^2 H dM &\leq \frac{2r_\Omega^2}{3} \int_{\Omega} \lan P_1(\n g),\n g\ran dM\\
&<\frac{2r_\Omega^2}{3}\int_\Omega (-3K)g^2dM\\
&\leq \frac{2r_\Omega^2}{3}\sup_\Omega\left(\dfrac{-3K}{H}\right)\int_\Omega g^2 H dM,\\  
\end{split}
\]
i.e.,
\[
1<\frac{2r_\Omega^2}{3}\sup_\Omega\left(\dfrac{-3K}{H}\right)
\]
which is a contradiction.
\end{proof}



\bibliographystyle{amsplain}

\begin{thebibliography}{}

\bibitem{AdCE}{Hil{\'a}rio Alencar, Manfredo do Carmo, Maria Fernanda Elbert, {\it Stability of Hypersurfaces with vanishing $r-$mean curvatures in Euclidean spaces}, J. Reine Angew. Math, {\bf 554}, 201-216 (2003)}


\bibitem{A-S-Z}{Hil\' ario Alencar, Walcy Santos, Detang Zhou, {\it Stable hypersurfaces with constant scalar curvature},  Proc. Amer. Math. Soc., {\bf 138}, 3301-3312 (2010)}


\bibitem{colding-minicozzi}{Tobias Holck Colding, William P. Minicozzi II, {\it A course in minimal surfaces}, xii+313. Graduate texts in Mathematics, {\bf 121}, American Mathematical Society, Providence (2011),} 

\bibitem{Elbert}{Maria Fernanda Elbert, {\it Constant positive 2-mean curvature hypersurfaces}, Illinois J. of Math., {\bf 46}, no 1, 247-267 (2002)}

\bibitem{HL}{Jorge Hounie, Maria Luiza Leite, {\it Two-ended hypersurfaces with zero scalar curvature}, Indiana Univ. Math. J., {\bf 48}, no 3, 867-882 (1999)}

\bibitem{HL2}{Jorge Hounie, Maria Luiza Leite, {\it The maximum principle for hypersurfaces with vanishing curvature functions}, 
J. Diff. Geom., {\bf 41}, 247-258 (1995)}


\bibitem{reilly}{Robert C. Reilly, {\it Variational properties of functions of the mean curvatures for hypersurfaces in space forms}, J. Diff. Geom., {\bf 8}, 465-477 (1973)}

\bibitem{SN}{Greg\' orio Silva Neto, {\it On stable hypersurfaces with vanishing scalar curvature}, Mathematische Zeitschrift, {\bf 277}, no 1-2, 481-497 (2014)}

\bibitem{Smale}{Stephen J. Smale, {\it On Morse index theorem}, J. Math. Mech., {\bf 14}, 1049-1055 (1965)}

\end{thebibliography}


\end{document}